\newtheoremstyle{theorem}
  {10pt}          
  {10pt}  
  {\sl}  
  {\parindent}     
  {\bf}  
  {. }    
  { }    
  {}     
\newtheorem{thm}{Theorem}[section]
\newtheorem{cor}[thm]{Corollary}
\newtheorem{lem}[thm]{Lemma}
\newtheorem{defn}{Definition}[section]
\newtheorem{rem}{Remark}[section]
\begin{document}
\title{\large\bf A monotonicity result  for the  $q-$fractional operator}
\author{\small \bf Bahaaeldin Abdalla $^{a}$,  T. Abdeljawad $^{a}$, Juan J. Nieto $^{c}$'$^{d}$   \\ {\footnotesize $^a$ Department of Mathematics and General Sciences,
 Prince Sultan University-Riyadh-KSA}\\ {\footnotesize $^c$ Faculty of Science, King Abdulaziz University } \\ {\footnotesize $^d$ Facultade de Matemáticas, Universidade de Santiago de Compostela } }
\date{}
\maketitle {\footnotesize {\noindent\bf Abstract.}  In this article we prove that if the $q-$fractional operator  $(~_{q}\nabla_{qa}^\alpha y)(t)$ of order $0<\alpha\leq 1$ , $0<q<1$ and starting at some $qa \in T_q=\{q^k: k \in \mathbb{Z}\}\cup \{0\},~~a>0$ is positive such that $y(a) \geq 0$,  then $y(t)$ is $c_q(\alpha)-$increasing, $c_q(\alpha)=\frac{1-q^\alpha}{1-q}q^{1-\alpha}$. Conversely, if y(t) is increasing and $y(a)\geq 0$, then $(~_{q}\nabla_{qa}^\alpha y)(t)\geq 0$. As an application, we proved a $q-$fractional version of the Mean-Value  Theorem .}\\ 

{ \textbf{Keywords}: q-fractional derivative, q-fractional integral, Caputo q-fractional derivative, $c_q(\alpha)-$increasing.}

\section{Introduction and Preliminaries}

Fractional calculus \cite{Podlubny, Samko, Kilbas} has recently occupied the minds of many researchers either  theoretically or in different fields of applications  \cite{r14, FTH}. The theory of $q-$fractional calculus was initiated in early of fifties of last century \cite{Hahn, Agarwal, Alsalam, Alsalam1, Alsalam2}. Then, this theory has started to be  developed in the last decade or so \cite{Pred}-\cite{TFD q}. For the preliminaries about $q-$ fractional calculus given here shortly, we refer the reader to the survey \cite{history} and the recent book \cite{Annaby}. On the other hand the theory of dicrete fractional calculus started to develop rapidly specially in the last decade \cite{Th}-\cite{r12}. Very recently, some monotonicity results have been reported for fractional difference type operators of order $0 <\alpha \leq 1$ \cite{Ferhan Monot}, and of order $1< \alpha <2$ \cite{Baoguo, Goodrich}. Motivated by what mentioned above and the fact that monotonicity results are of interest in usual calculus itself we obtain some monotonicity results for the $q-$fractional type operators of order $0<\alpha \leq 1$ in Section 2. An application is also given in Section 3 by giving a $q-$fractional mean value theorem version.

For $0<q<1,$ let $T_{q}$ be the time scale $$T_{q}=\{q^{n}:n\in \mathbb{Z}\}\cup \{0\}$$
More generally, if $\alpha$ is a nonnegative real number then we define the time scale
$$T_{q}^{\alpha}=\{q^{n+\alpha}:n \in \mathbb{Z}\}\cup \{0\}$$
We write $T_{q}^{0}=T_{q}.$\\
For a function $f:T_{q}\rightarrow \mathbb{R},$ the nabla $q-$derivative of $f$ is given by
\begin{equation}
\nabla_{q}f(t)=\frac{f(t)-f(qt)}{(1-q)t}, \; t \in T_{q} - \{0\}
\end{equation}
The nabla $q-$integral of $f$ is given by
\begin{equation}
\int_{0}^{t}{f(s)\nabla_{q}s}=(1-q)t\sum_{i=0}^{\infty}{q^{i}f(tq^{i})}
\end{equation}
and for $0\leq a \in T_{q}$
$$\int_{a}^{t}{f(s)\nabla_{q}s}=\int_{0}^{t}{f(s)\nabla_{q}s}-\int_{0}^{a}{f(s)\nabla_{q}s}$$
Alternatively, let $a=q^{n_0} \in T_q$ where $n<n_0$, the nabla $q-$integral of $f$ is given by
\begin{equation}
\int_{a}^{t}{f(s)\nabla_{q}s}=(1-q)\sum_{i=n}^{n_{0}-1}{q^{i}f(q^{i})}
\end{equation}
By the fundamental theorem in $q-$calculus we have
\begin{equation}
\nabla_{q} \int_{0}^{t}{f(s)\nabla_{q}s}=f(t)
\end{equation}
and if $f$ is continuous at 0, then
\begin{equation}
\int_{0}^{t}{\nabla_{q}f(s)\nabla_{q}s}=f(t)-f(0)
\end{equation}
Also the following identity will be helpful
\begin{equation}
\nabla_{q} \int_{a}^{t}{f(t,s)\nabla_{q}s}= \int_{a}^{t}{\nabla_{q}f(t,s)\nabla_{q}s}+f(qt,t)
\end{equation}
From the theory of $q-$calculus and the theory of time scale more generally, the following product rule is valid
\begin{equation}
\nabla_{q}(f(t)g(t))=f(qt)\nabla_{q}g(t)+(\nabla_{q}f(t))g(t)
\end{equation}
The $q-$factorial function for $n \in \mathbb{N}$ is defined by
\begin{equation}
(t-s)_{q}^{n}=\prod_{i=0}^{n-1}{(t-q^{i}s)}
\end{equation}
More generally, when $\alpha$ is a not a positive integer, the $q-$factorial fractional function is defined by
\begin{equation} \label{6}
(t-s)_{q}^{\alpha}=t^{\alpha} \prod_{i=0}^{\infty}{\frac{1-\frac{s}{t}q^{i}}{1-\frac{s}{t}q^{i+\alpha}}}
\end{equation}
It has the following properties
\begin{itemize}
  \item $(t-s)_{q}^{\beta +\gamma}=(t-s)_{q}^{\beta}(t-q^{\beta}s)_{q}^{\gamma}$
 \item $(at-as)_{q}^{\beta}=a^{\beta}(t-s)_{q}^{\beta}$
  \item The nabla $q-$derivative of the $q-$factorial function with respect to $t$ is $$\nabla_{q}(t-s)_{q}^{\alpha}=\frac{1-q^{\alpha}}{1-q}(t-s)_{q}^{\alpha-1}$$
 \item The nabla $q-$derivative of the $q-$factorial function with respect to $s$ is $$\nabla_{q}(t-s)_{q}^{\alpha}=-\frac{1-q^{\alpha}}{1-q}(t-qs)_{q}^{\alpha-1}$$ where $\alpha , \beta , \gamma \in \mathbb{R}.$
\end{itemize}
Moreover, the $q-$fractional integral of order $\alpha \neq 0,-1,-2,...$ is defined by
\begin{equation}
_{q}I_{0}^{\alpha}f(t)=\frac{1}{\Gamma_{q}(\alpha)} \int_{0}^{t}{(t-qs)_{q}^{\alpha -1}f(s)\nabla_{q}s}.
\end{equation}
Let $\alpha >0.$ If $\alpha \notin \mathbb{N},$ then the $\alpha -$order Caputo (left) $q-$fractional derivative of a function $f$ is defined by \cite{TD2011}
\begin{equation}
_{q}C_{a}^{\alpha}f(t)\triangleq \: _{q}I_{a}^{(n-\alpha)}\nabla_{q}^{n}f(t)=\frac{1}{\Gamma (n-\alpha)}\int_{a}^{t}{(t-qs)_{q}^{n-\alpha -1}\nabla_{q}^{n}f(s)\nabla_{q}s}
\end{equation}
where $n=[\alpha]+1$ and $[\alpha]$ denotes the greatest integer less than or equal to $\alpha$.
If $\alpha \in \mathbb{N},$ then $_{q}C_{a}^{\alpha}f(t)\triangleq \nabla_{q}^{n}f(t)$\\
The following identity is useful to transform Caputo $q-$fractional difference  equation into $q-$fractional integrals.\\
Assume $\alpha >0$ and $f$ is defined in suitable domains. Then \cite{TD2011}
\begin{equation}
_{q}I_{a}^{\alpha} \, _{q}C_{a}^{\alpha}f(t)=f(t)-\sum_{k=0}^{n-1}{\frac{(t-a)_{q}^{k}}{\Gamma_{q}(k+1)}\nabla_{q}^{k}f(a)}
\end{equation}
and if $0<\alpha \leq 1$ then
\begin{equation}
_{q}I_{a}^{\alpha} \: _{q}C_{a}^{\alpha}f(t)=f(t)-f(a)
\end{equation}
The following identity is essential to solve linear $q-$fractional equations
\begin{equation}
_{q}I_{a}^{\alpha}(x-a)_{q}^{\mu}=\frac{\Gamma_{q}(\mu +1)}{\Gamma_{q}(\alpha + \mu +1)}(x-a)_{q}^{\mu + \alpha}\;\; (0\leq a<x<b)
\end{equation}
where $\alpha \in \mathbb{R}^{+}$ and $\mu \in (-1,\infty).$\\
For more about $q-$Gamma functions and other  $q-$calculus concepts we refer, for example.  to \cite{history}
The following lemma is used in the proof of the main result. The proof follows from (\ref{6}).
\begin {lem}
\begin{itemize}
 \item $(1-q^i)_{q}^{-\alpha}=\frac{1-q^i}{1-q^{i-\alpha}}(1-q^{i+1})_{q}^{-\alpha}$
 \item $(q^{n+1}-1)_{q}^{-\alpha}=\frac{1-q^{-1-n}}{q^{\alpha}-q^{-1-n}}(q^n-1)_{q}^{-\alpha}$
 \item $(q^m-q^n)_{q}^{-\alpha}=\frac{1-q^{n-m}}{q^{\alpha}-q^{n-m}}(q^{m-1}-q^n)_{q}^{-\alpha}$
 \item $(q^m-q^{n-1})_{q}^{-\alpha}=\frac{1-q^{-m+n-1}}{1-q^{-m+n-1-\alpha}}(q^{m}-q^n)_{q}^{-\alpha}$
\end{itemize}
\end{lem}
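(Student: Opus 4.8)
\emph{Proof proposal.} The plan is to read all four identities straight off the defining product \eqref{6}, taken with exponent $-\alpha$,
\[
(t-s)_{q}^{-\alpha}=t^{-\alpha}\prod_{i=0}^{\infty}\frac{1-\frac{s}{t}q^{i}}{1-\frac{s}{t}q^{i-\alpha}},
\]
combined with the scaling rule $(at-as)_{q}^{\beta}=a^{\beta}(t-s)_{q}^{\beta}$ listed above. The one substantive step is a ``$q$-shift in the second argument'', namely
\[
\frac{(1-s)_{q}^{-\alpha}}{(1-qs)_{q}^{-\alpha}}=\frac{1-s}{1-sq^{-\alpha}};
\]
once this is available, each of the four bullets amounts to choosing $s$ appropriately and, when the first argument is a power of $q$ rather than $1$, factoring that power out with the scaling rule.

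To prove the shift identity I would write, using \eqref{6} with $t=1$,
\[
(1-s)_{q}^{-\alpha}=\prod_{i\ge 0}\frac{1-sq^{i}}{1-sq^{i-\alpha}},\qquad (1-qs)_{q}^{-\alpha}=\prod_{i\ge 0}\frac{1-sq^{i+1}}{1-sq^{i+1-\alpha}}=\prod_{i\ge 1}\frac{1-sq^{i}}{1-sq^{i-\alpha}},
\]
and compare partial products: the $N$-th partial quotient is $\dfrac{(1-s)\,(1-sq^{N+1-\alpha})}{(1-sq^{-\alpha})\,(1-sq^{N+1})}$, which tends to $\dfrac{1-s}{1-sq^{-\alpha}}$ as $N\to\infty$ because $0<q<1$ forces $sq^{N+1}\to0$ and $sq^{N+1-\alpha}\to0$. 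Specializing $s=q^{i}$ yields the first bullet verbatim. For the fourth bullet, use $(q^{m}-q^{n-1})_{q}^{-\alpha}=q^{-m\alpha}(1-q^{n-1-m})_{q}^{-\alpha}$ and $(q^{m}-q^{n})_{q}^{-\alpha}=q^{-m\alpha}(1-q^{n-m})_{q}^{-\alpha}$, then apply the shift identity with $s=q^{n-1-m}$ (so $qs=q^{n-m}$). For the third bullet, first pull $q^{-1}$ out by the scaling rule, $(q^{m-1}-q^{n})_{q}^{-\alpha}=q^{\alpha}(q^{m}-q^{n+1})_{q}^{-\alpha}$, then reduce $(q^{m}-q^{n+1})_{q}^{-\alpha}$ to $(q^{m}-q^{n})_{q}^{-\alpha}$ via the shift identity with $s=q^{n-m}$; combining the two and simplifying $q^{\alpha}(1-q^{n-m-\alpha})=q^{\alpha}-q^{n-m}$ produces the stated coefficient. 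The second bullet is the third one with $m=n+1$ and the fixed argument there equal to $q^{0}=1$, after noting $-(n+1)=-1-n$.

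The only point that calls for care is that for some of the arguments appearing in the statement — e.g. $(q^{n+1}-1)_{q}^{-\alpha}$ with $n\ge0$ — one has $\tfrac{s}{t}\ge1$, so the infinite product in \eqref{6} is not literally convergent. I would dispose of this by always first invoking the scaling rule $(at-as)_{q}^{\beta}=a^{\beta}(t-s)_{q}^{\beta}$ to reduce every occurring quotient to one of the convergent form $(1-s)_{q}^{-\alpha}/(1-qs)_{q}^{-\alpha}$ with $|s|<1$, so that the telescoping above is genuinely valid; equivalently, one reads \eqref{6} as defining $(t-s)_{q}^{-\alpha}$ by analytic continuation and the four relations as identities between those continuations. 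Everything else is exponent bookkeeping — $n-1-m=-m+n-1$, $q^{\alpha}(1-q^{n-m-\alpha})=q^{\alpha}-q^{n-m}$, and the like — so I do not expect any real obstacle beyond keeping the indices straight.
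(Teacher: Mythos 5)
Your proposal is correct and takes essentially the same route as the paper, which offers no details beyond remarking that the lemma ``follows from (\ref{6})'': your telescoping of partial products of the defining infinite product, combined with the scaling rule $(at-as)_q^{\beta}=a^{\beta}(t-s)_q^{\beta}$, is precisely the computation left implicit there, and all four coefficient calculations check out. The only quibble is your convergence caveat: the product in (\ref{6}) converges even when $s/t\geq 1$, since $0<q<1$ makes the factors tend to $1$ (the genuine degenerate case is a vanishing factor, where both sides are $0$), so your reduction to $|s/t|<1$ is harmless but not needed.
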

\begin{defn}
 Fix $\alpha \geq 0$ and define $$c_q(\alpha)= \frac {1-q^{\alpha}}{1-q}q^{1-\alpha}$$
 \end{defn}

\begin{defn}
Let $y:T_q \rightarrow  {R} $ be a function. y is called a $c_q(\alpha)-$increasing on $T_q$, if
  $$y(q^{n_-1}) \geq c_q(\alpha)y(q^{n}) \; \mbox{for all} \; q^n \in T_q.$$
 \end{defn}
 \begin{defn}
Let $y:T_q \rightarrow  {R} $ be a function. y is called a $c_q(\alpha)-$decreasing on $T_q$, if
  $$y(q^{n_-1}) \leq c_q(\alpha)y(q^{n}) \; \mbox{for all} \; q^n \in T_q.$$
 \end{defn}
  Notice that if $\alpha \geq 1$, then $c_q(\alpha) \geq 1$ and if $0\leq \alpha \leq 1$, then $0\leq c_q(\alpha) \leq 1$. Hence, if $y$ is increasing (decreasing) on $T_q$ and and $0<\alpha <1$ then $y$ is $c_q(\alpha)-$increasing (decreasing) on $T_q$. Also, if $y$ is $c_q(\alpha)-$increasing (decreasing) on $T_q$ and $\alpha >1$, then, $y$ is increasing (decreasing) on $T_q$. If $\alpha=1$ then $y$ is $c_q(\alpha)-$increasing (decreasing) if and only if $y$ is increasing (decreasing).
\section{Main Results}
\begin{thm} \label{first}
Let $y:T_q \rightarrow  \mathbb{R} $ be a function satisfying $y(a) \geq 0$, $a=q^{n_0}>0$. Suppose that
 $$_{q}\nabla_{aq}^{\alpha} y(t)\geq 0 \mbox{ for each}~ \; t=q^n, n<n_0.$$
 Then, $y$ is $c_q(\alpha)-$increasing on $\{t \in T_q: t\geq a = q^{n_0}\}$.
\end{thm}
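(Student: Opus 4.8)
The plan is to turn $(~_{q}\nabla_{aq}^{\alpha}y)(t)$ into an explicit finite $q$-sum and then run a downward induction starting at $a=q^{n_{0}}$. Writing $aq=q^{n_{0}+1}$ and reading $~_{q}\nabla_{aq}^{\alpha}$ as the Riemann--Liouville nabla $q$-fractional operator $\nabla_{q}\,{}_{q}I_{aq}^{1-\alpha}$, identity (6) applied to ${}_{q}I_{aq}^{1-\alpha}y$ throws off a boundary term $\tfrac{1}{\Gamma_{q}(1-\alpha)}(qt-qt)_{q}^{-\alpha}y(t)$, which vanishes because $(0)_{q}^{-\alpha}=0$, leaving
$$(~_{q}\nabla_{aq}^{\alpha}y)(t)=\frac{1-q^{-\alpha}}{(1-q)\Gamma_{q}(1-\alpha)}\int_{aq}^{t}(t-qs)_{q}^{-\alpha-1}y(s)\,\nabla_{q}s=\frac{1}{\Gamma_{q}(-\alpha)}\int_{aq}^{t}(t-qs)_{q}^{-\alpha-1}y(s)\,\nabla_{q}s,$$
using the $q$-Gamma relation $\Gamma_{q}(1-\alpha)=\tfrac{1-q^{-\alpha}}{1-q}\Gamma_{q}(-\alpha)$. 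Evaluating at $t=q^{n}$ with $n<n_{0}$ by the nabla $q$-integral formula gives
$$(~_{q}\nabla_{aq}^{\alpha}y)(q^{n})=\frac{1-q}{\Gamma_{q}(-\alpha)}\sum_{i=n}^{n_{0}}q^{i}\,(q^{n}-q^{i+1})_{q}^{-\alpha-1}\,y(q^{i})=:\sum_{i=n}^{n_{0}}C_{n,i}\,y(q^{i}).$$
(I treat $0<\alpha<1$; when $\alpha=1$ the operator is simply $\nabla_{q}$, $c_{q}(1)=1$, and the statement is the classical one.)

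The next step is to determine the signs of the $C_{n,i}$ from the product representation (\ref{6}) and the scaling property $(at-as)_{q}^{\beta}=a^{\beta}(t-s)_{q}^{\beta}$: first, $\Gamma_{q}(-\alpha)<0$ for $0<\alpha<1$; second, $(q^{n}-q^{n+1})_{q}^{-\alpha-1}=q^{-n(\alpha+1)}(1-q)_{q}^{-\alpha-1}$ with $(1-q)_{q}^{-\alpha-1}=\prod_{j\ge0}\tfrac{1-q^{j+1}}{1-q^{j-\alpha}}$, whose only negative factor is the $j=0$ term $\tfrac{1-q}{1-q^{-\alpha}}$, so $C_{n,n}>0$; third, for $i\ge n+1$ the product $(q^{n}-q^{i+1})_{q}^{-\alpha-1}=q^{-n(\alpha+1)}\prod_{j\ge0}\tfrac{1-q^{\,i+1-n+j}}{1-q^{\,i-n+j-\alpha}}$ has all factors positive, since the exponents $i-n+j-\alpha\ge1-\alpha>0$ stay positive, so $C_{n,i}<0$. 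The one genuinely computational point is the ratio of the first two coefficients: factoring out $q^{-n(\alpha+1)}$ and telescoping the products in (\ref{6}) (equivalently, applying the $q$-factorial identities of Lemma 1.1) gives
$$-\frac{C_{n,n+1}}{C_{n,n}}=-q\,\frac{(1-q^{2})_{q}^{-\alpha-1}}{(1-q)_{q}^{-\alpha-1}}=-q\cdot\frac{1-q^{-\alpha}}{1-q}=\frac{q^{1-\alpha}(1-q^{\alpha})}{1-q}=c_{q}(\alpha).$$

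Finally I would induct downward, proving that for every integer $k\le n_{0}$ (equivalently $q^{k}\ge a$) one has $y(q^{k})\ge0$ and $y(q^{k-1})\ge c_{q}(\alpha)y(q^{k})$, which is exactly the assertion. For the base case $k=n_{0}$, apply the hypothesis at $t=q^{n_{0}-1}$, where only the terms $i=n_{0}-1,n_{0}$ appear; dividing $C_{n_{0}-1,n_{0}-1}y(q^{n_{0}-1})+C_{n_{0}-1,n_{0}}y(q^{n_{0}})\ge0$ by $C_{n_{0}-1,n_{0}-1}>0$ and using the ratio above together with $y(q^{n_{0}})=y(a)\ge0$ and $c_{q}(\alpha)\ge0$ yields $y(q^{n_{0}-1})\ge c_{q}(\alpha)y(a)\ge0$. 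For the inductive step, assume the claim down to level $k$, so $y(q^{i})\ge0$ for all $i$ with $k-1\le i\le n_{0}$, and apply the hypothesis at $t=q^{k-2}$: in $C_{k-2,k-2}y(q^{k-2})+\sum_{i=k-1}^{n_{0}}C_{k-2,i}y(q^{i})\ge0$ each term with $i\ge k-1$ has $C_{k-2,i}\le0$ and $y(q^{i})\ge0$, hence is $\le0$, so discarding the terms $i\ge k$ only strengthens the inequality and gives $C_{k-2,k-2}y(q^{k-2})\ge -C_{k-2,k-1}y(q^{k-1})$, i.e.\ $y(q^{k-2})\ge c_{q}(\alpha)y(q^{k-1})\ge0$; this is the claim at level $k-1$.

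The main obstacle is the coefficient identity $-C_{n,n+1}/C_{n,n}=c_{q}(\alpha)$: it is the only place where the $q$-factorial algebra of Lemma 1.1 (telescoping the infinite products in (\ref{6})) is really needed, and the entire induction rests on pinning down that constant exactly while simultaneously confirming that every remaining coefficient $C_{n,i}$ with $i>n$ is non-positive, so that the non-negative tail $\sum_{i\ge k}C_{k-2,i}y(q^{i})$ may be dropped.
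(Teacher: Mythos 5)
Your proof is correct, and its engine is the same as the paper's: a downward induction anchored at $a=q^{n_0}$, driven by the sign pattern of the $q$-kernel coefficients and by identifying the ratio of the two leading coefficients with $c_q(\alpha)$. The packaging differs in two ways. First, you push $\nabla_q$ through the integral via the differentiation-under-the-integral identity (6) (the boundary term vanishing since $(qt-qt)_q^{-\alpha}=0$) to get the single-sum representation $({}_{q}\nabla_{aq}^{\alpha}y)(q^n)=\frac{1-q}{\Gamma_q(-\alpha)}\sum_{i=n}^{n_0}q^i\,(q^n-q^{i+1})_q^{-\alpha-1}y(q^i)$, and you read off all coefficient signs directly from the product representation (9); the paper never forms this sum, but instead compares successive values $s(q^{n-1})-s(q^{n})$ of the $(1-\alpha)$-order integral and reorganizes them with the telescoping identities of Lemma 1.1 -- the resulting coefficients agree with your $C_{n,i}$ up to the positive factor $(1-q)q^{n}$, so the computational content (including the key ratio $-C_{n,n+1}/C_{n,n}=c_q(\alpha)$, which you verify correctly) is identical. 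Second, your induction bookkeeping is lighter: since $c_q(\alpha)\ge 0$ and $y(a)\ge 0$, you only carry non-negativity of $y$ at the points already treated, which lets you discard the entire non-positive tail at once and isolate the two leading terms; the paper instead carries the full chain $y(q^{n_0-k})\ge c_q(\alpha)y(q^{n_0-k+1})\ge\cdots\ge c_q^{k}(\alpha)y(q^{n_0})$ and bounds the tail from below by a non-negative multiple of $y(q^{n_0})$. Your version is arguably cleaner; what the paper's route buys is that it stays entirely within the order $-\alpha$ kernel and the listed identities, never needing $\Gamma_q(-\alpha)$ or the order $-\alpha-1$ factorial function.
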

\begin{proof}
Let $_{q}\nabla_{aq}^{\alpha} y(t)\geq 0 $  for each  $t \in T_q$, $\alpha \in (0,1)$, then
$$ _{q}\nabla_{aq}^{\alpha} y(t)=\nabla _{q}\nabla _{aq}^{-(1-\alpha)} y(t)=\nabla_q \left[ \frac{1}{\Gamma_q (1-\alpha)}\int_{aq}^{t}(t-qs)_{q}^{-\alpha}y(s)\nabla_q (s)\right] \geq 0$$
Let  $s(t)=\frac{1-q}{\Gamma_q (1-\alpha)} \sum_{i=n_0}^n q^i (q^n-q^{i+1})_q^{-\alpha}y(q^i).$ Since $\nabla_q s(t) \geq 0,$ $s(t)$ is an increasing function on $T_q.$ This implies that
\begin{eqnarray*}
s(q^{n_0-1})-s(q^{n_0})
&=& \frac{(1-q)q^{n_0-1}}{\Gamma_q(1-\alpha)}[(q^{n_0-1}-q^{n_0})_q^{-\alpha}y(q^{n_0-1})\\
&+& q(q^{n_0-1}-q^{n_0+1})_q^{-\alpha}y(q^{n_0})-q(q^{n_0}-q^{n_0+1})_q^{-\alpha}y(q^{n_0})]\\
&=& \frac{(1-q)q^{n_0-1}}{\Gamma_q(1-\alpha)}[q^{\alpha}(q^{n_0}-q^{n_0+1})_q^{-\alpha}y(q^{n_0-1})\\
&+&q\frac{q^{\alpha}-q}{1-q}(q^{n_0}-q^{n_0+1})_q^{-\alpha}y(q^{n_0})-q(q^{n_0}-q^{n_0+1})_q^{-\alpha}y(q^{n_0})]\\
&\geq&0
\end{eqnarray*}
therefore, we have
$$q[\frac{q^{\alpha}-q}{1-q}-1]y(q^{n_0})+q^{\alpha}y(q^{n_0-1})\geq 0$$
which implies that
$$y(q^{n_0-1})\geq \frac{1-q^{\alpha}}{1-q}q^{1-\alpha}y(q^{n_0})$$
Now, we assume that the hypothesis is true for $n=k$. i.e.
$$y(q^{n_0-k})\geq \frac{1-q^{\alpha}}{1-q}q^{1-\alpha}y(q^{n_0-k+1})$$
hence,we have
\begin{equation} \label{7}
y(q^{n_0-k})\geq c_q(\alpha)y(q^{n_0-k+1})\geq c_{q}^2(\alpha)y(q^{n_0-k+2})\geq ...\geq c_{q}^{k-1}(\alpha)y(q^{n_0-1})\geq c_{q}^{k}(\alpha)y(q^{n_0})
\end{equation}
We want to prove that
\begin{equation} \label{8}
y(q^{n_0-k-1})\geq c_q(\alpha)y(q^{n_0-k}).
\end{equation}
We start by calculating,
\begin{eqnarray*}
&& s(q^{n_0-k-1})-s(q^{n_0-k})\\
&=&\frac{(1-q)}{\Gamma_q(1-\alpha)}\left[\sum_{i=n_0-k-1}^{n_0} q^i (q^{n_0-k-1}-q^{i+1})_q^{-\alpha}y(q^i)-\sum_{i=n_0-k}^{n_0} q^i (q^{n_0-k}-q^{i+1})_q^{-\alpha}y(q^i)\right]\\
&=&\frac{(1-q)}{\Gamma_q(1-\alpha)}[q^{n_0-k-1} (q^{n_0-k-1}-q^{n_0-k})_q^{-\alpha}y(q^{n_0-k-1})\\
&+&q^{n_0-k} (q^{n_0-k-1}-q^{n_0-k+1})_q^{-\alpha}y(q^{n_0-k})-q^{n_0-k} (q^{n_0-k}-q^{n_0-k+1})_q^{-\alpha}y(q^{n_0-k})\\
&+&q^{n_0-k+1} (q^{n_0-k-1}-q^{n_0-k+2})_q^{-\alpha}y(q^{n_0-k+1})-q^{n_0-k+1}(q^{n_0-k}-q^{n_0-k+2})_q^{-\alpha}y(q^{n_0-k+1})\\
&+&.....\\
&+&q^{n_0-1} (q^{n_0-k-1}-q^{n_0})_q^{-\alpha}y(q^{n_0-1})-q^{n_0-1} (q^{n_0-k}-q^{n_0})_q^{-\alpha}y(q^{n_0-1})\\
&+&q^{n_0} (q^{n_0-k-1}-q^{n_0+1})_q^{-\alpha}y(q^{n_0})-q^{n_0} (q^{n_0-k}-q^{n_0+1})_q^{-\alpha}y(q^{n_0})]\\
&=&\frac{(1-q)}{\Gamma_q(1-\alpha)}[q^{n_0-k-1} (q^{n_0-k-1}-q^{n_0-k})_q^{-\alpha}y(q^{n_0-k-1})\\
&+&q^{n_0-k}(q^{n_0-k}-q^{n_0-k+1})_q^{-\alpha}y(q^{n_0-k})(\frac{q^{\alpha}-q}{1-q}-1)\\
&+&q^{n_0-k+1}(q^{n_0-k}-q^{n_0-k+2})_q^{-\alpha}y(q^{n_0-k+1})(\frac{q^{\alpha}-q^2}{1-q^2}-1)\\
&+&.....\\
&+&q^{n_0-1}(q^{n_0-k}-q^{n_0})_q^{-\alpha}y(q^{n_0-1})(\frac{q^{\alpha}-q^{k}}{1-q^{k}}-1)\\
&+&q^{n_0}(q^{n_0-k}-q^{n_0+1})_q^{-\alpha}y(q^{n_0})(\frac{q^{\alpha}-q^{k+1}}{1-q^{k+1}}-1)]\\
\end{eqnarray*}
Since $s(t)$ is increasing, we get
\begin{eqnarray*}
&&q^{n_0-k-1} (q^{n_0-k-1}-q^{n_0-k})_q^{-\alpha}y(q^{n_0-k-1})+q^{n_0-k}(q^{n_0-k}-q^{n_0-k+1})_q^{-\alpha}y(q^{n_0-k})(\frac{q^{\alpha}-1}{1-q})\\
&\geq&q^{n_0-k+1}(q^{n_0-k}-q^{n_0-k+2})_q^{-\alpha}y(q^{n_0-k+1})(\frac{1-q^{\alpha}}{1-q^2})\\
&+&.....\\
&+&q^{n_0-1}(q^{n_0-k}-q^{n_0})_q^{-\alpha}y(q^{n_0-1})(\frac{1-q^{\alpha}}{1-q^{k}})\\
&+&q^{n_0}(q^{n_0-k}-q^{n_0+1})_q^{-\alpha}y(q^{n_0})(\frac{1-q^{\alpha}}{1-q^{k+1}})\\
\end{eqnarray*}
Using the induction assumption (\ref{7}), we get
\begin{eqnarray*}
&&q^{n_0-k-1}(q^{n_0-k}-q^{n_0-k+1})_q^{-\alpha}\left[q^{\alpha}y(q^{n_0-k-1})+q\frac{q^{\alpha}-1}{1-q}y(q^{n_0-k})\right]\\
&\geq&q^{n_0-k+1}(q^{n_0-k}-q^{n_0-k+2})_q^{-\alpha}(\frac{1-q^{\alpha}}{1-q^2})(c_q(\alpha))^{k-1}y(q^{n_0})\\
&+&.....\\
&+&q^{n_0-1}(q^{n_0-k}-q^{n_0})_q^{-\alpha}(\frac{1-q^{\alpha}}{1-q^{k}})(c_q(\alpha))y(q^{n_0})\\
&+&q^{n_0}(q^{n_0-k}-q^{n_0+1})_q^{-\alpha}(\frac{1-q^{\alpha}}{1-q^{k+1}})y(q^{n_0})\\
\end{eqnarray*}
Since $y(q^{n_0})\geq0$, we conclude that $q^{\alpha}y(q^{n_0-k-1})+q\frac{q^{\alpha}-1}{1-q}y(q^{n_0-k})\geq0$ which implies that $y(q^{n_0-k-1})\geq c_q(\alpha)y(q^{n_0-k})$
\end{proof}

Using Theorem \label{first} above and  the following identity that relates (Riemann) $q-$fractional derivative $~_{q}\nabla_a^\alpha $ and the Caputo $q-$fractional derivative $~_{q}C_a^\alpha$ of order $0<\alpha <1$ \cite{Th}

$$(~_{q}C_a^\alpha f)(t)=(~_{q}\nabla_a^\alpha f)(t)-\frac{(t-a)_q^{-\alpha}}{\Gamma_q(1-\alpha)}  y(a), $$

we can state the following Caputo monotonicity result:

\begin{cor}
Let $y:T_q \rightarrow  \mathbb{R} $ be a function satisfying $y(a) \geq 0$, $a=q^{n_0}>0$. Suppose that
 $$_{q}C_{aq}^{\alpha} y(t)\geq - \frac{(t-qa)_q^{-\alpha}}{\Gamma_q(1-\alpha)}  y(qa) \mbox{ for each}~ \; t=q^n, n<n_0.$$
 Then, $y$ is $c_q(\alpha)-$increasing on $\{t \in T_q: t\geq a = q^{n_0}\}$.
\end{cor}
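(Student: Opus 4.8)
The plan is to deduce the Corollary directly from Theorem~\ref{first} via the stated conversion identity between the Caputo and Riemann $q$-fractional derivatives, so that no new estimation is required. First I would apply that identity with base point $qa$ in place of $a$, to the function $y$:
$$(~_{q}C_{aq}^\alpha y)(t)=(~_{q}\nabla_{aq}^\alpha y)(t)-\frac{(t-qa)_q^{-\alpha}}{\Gamma_q(1-\alpha)}\,y(qa),$$
valid for $t=q^n$ with $n<n_0$. Rearranging gives
$$(~_{q}\nabla_{aq}^\alpha y)(t)=(~_{q}C_{aq}^\alpha y)(t)+\frac{(t-qa)_q^{-\alpha}}{\Gamma_q(1-\alpha)}\,y(qa).$$

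Next I would feed in the hypothesis of the Corollary, namely $(~_{q}C_{aq}^\alpha y)(t)\ge-\frac{(t-qa)_q^{-\alpha}}{\Gamma_q(1-\alpha)}\,y(qa)$ for every $t=q^n$ with $n<n_0$. Substituting into the displayed identity, the right-hand side is nonnegative, hence $(~_{q}\nabla_{aq}^\alpha y)(t)\ge 0$ for each such $t$. Combined with the standing assumption $y(a)\ge 0$ (with $a=q^{n_0}>0$), these are precisely the hypotheses of Theorem~\ref{first}, whose conclusion then gives that $y$ is $c_q(\alpha)$-increasing on $\{t\in T_q:\ t\ge a=q^{n_0}\}$, which is the assertion.

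I do not anticipate a real obstacle: all the analytic content is already contained in Theorem~\ref{first}, and the Caputo hypothesis has been written exactly so that the additive correction term cancels upon rearrangement. The only points meriting a moment of care are bookkeeping ones — that the conversion identity is invoked at the correct base point $qa$ rather than $a$, and over the correct index range $n<n_0$ — and noting that no hidden sign condition on $(t-qa)_q^{-\alpha}/\Gamma_q(1-\alpha)$ or on $y(qa)$ is secretly needed, since the term is simply moved to the other side of the inequality. (Were one to prefer the cleaner hypothesis $(~_{q}C_{aq}^\alpha y)(t)\ge 0$, one would then additionally want positivity of that correction term, which would follow from $y(qa)\ge 0$ together with positivity of the $q$-factorial and $q$-Gamma factors on the relevant time scale; but this strengthening is not required for the statement as given.)
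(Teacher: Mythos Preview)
Your proposal is correct and follows essentially the same approach as the paper: the paper does not give a formal proof for this Corollary but simply states that it follows from Theorem~\ref{first} together with the Caputo--Riemann conversion identity $(~_{q}C_a^\alpha f)(t)=(~_{q}\nabla_a^\alpha f)(t)-\frac{(t-a)_q^{-\alpha}}{\Gamma_q(1-\alpha)}\,y(a)$ applied at base point $qa$. Your write-up spells out the rearrangement and the invocation of Theorem~\ref{first} exactly as intended, with the correct attention to the base point $qa$ and the index range $n<n_0$.
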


\begin{thm}
Let $y:T_q \rightarrow  \mathbb{R} $ be a function satisfying $y(q^{n_0}) \geq 0$, $a=q^{n_0}$. Suppose that $y$ is an increasing function on $T_q$. Then,
 $$_{q}\nabla_{aq}^{\alpha} y(t)\geq 0 \mbox{ for each} \; t=q^n, n<n_0.$$
\end{thm}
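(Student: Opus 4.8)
The plan is to run the discrete computation of the forward implication (the proof of Theorem~\ref{first}) in reverse, using the monotonicity of $y$ to control signs. Fix $n<n_0$ and, exactly as there, set
$$s(q^{n})=\frac{1-q}{\Gamma_q(1-\alpha)}\sum_{i=n}^{n_0}q^{i}\,(q^{n}-q^{i+1})_q^{-\alpha}\,y(q^{i}),$$
so that $(~_{q}\nabla_{aq}^{\alpha}y)(q^{n})=\nabla_q s(q^{n})=\dfrac{s(q^{n})-s(q^{n+1})}{(1-q)q^{n}}$; hence it suffices to prove $s(q^{n})\geq s(q^{n+1})$. Peeling off the $i=n$ term,
$$\frac{\Gamma_q(1-\alpha)}{1-q}\bigl(s(q^{n})-s(q^{n+1})\bigr)=q^{n}(q^{n}-q^{n+1})_q^{-\alpha}y(q^{n})+\sum_{i=n+1}^{n_0}q^{i}\,c_i\,y(q^{i}),\qquad c_i:=(q^{n}-q^{i+1})_q^{-\alpha}-(q^{n+1}-q^{i+1})_q^{-\alpha}.$$

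The first step is to fix the sign of $c_i$ for $i\geq n+1$. By the third identity of the Lemma (applied to the arguments $q^{n+1}$ and $q^{i+1}$), $(q^{n+1}-q^{i+1})_q^{-\alpha}=\dfrac{1-q^{\,i-n}}{q^{\alpha}-q^{\,i-n}}(q^{n}-q^{i+1})_q^{-\alpha}$, hence $c_i=(q^{n}-q^{i+1})_q^{-\alpha}\cdot\dfrac{q^{\alpha}-1}{q^{\alpha}-q^{\,i-n}}$. For $i\geq n+1$ one has $i-n\geq 1>\alpha$, so $q^{\alpha}>q^{\,i-n}$ and $q^{\alpha}<1$; together with the product representation (\ref{6}), which (the two arguments having ratio in $(0,1)$) gives $(q^{n}-q^{i+1})_q^{-\alpha}>0$, this yields $c_i<0$.

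Now I would use monotonicity termwise. Since $y$ is increasing and $q^{i}<q^{n}$ for $i>n$, we have $y(q^{i})\leq y(q^{n})$, and multiplying by the negative number $c_i$ reverses this: $c_i\,y(q^{i})\geq c_i\,y(q^{n})$. Summing,
$$\frac{\Gamma_q(1-\alpha)}{1-q}\bigl(s(q^{n})-s(q^{n+1})\bigr)\ \geq\ y(q^{n})\left[\,q^{n}(q^{n}-q^{n+1})_q^{-\alpha}+\sum_{i=n+1}^{n_0}q^{i}c_i\,\right].$$
It remains to identify the bracket: it equals $\dfrac{\Gamma_q(1-\alpha)}{1-q}\,q^{n}$ times $(~_{q}\nabla_{aq}^{\alpha}\mathbf 1)(q^{n})$, the $q$-fractional derivative of the constant $1$ at $q^{n}$. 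By the power rule $~_{q}I_{aq}^{1-\alpha}(t-qa)_q^{0}=\dfrac{1}{\Gamma_q(2-\alpha)}(t-qa)_q^{1-\alpha}$ followed by $\nabla_q(t-qa)_q^{1-\alpha}=\dfrac{1-q^{1-\alpha}}{1-q}(t-qa)_q^{-\alpha}$, this derivative is a positive constant times $(q^{n}-qa)_q^{-\alpha}=(q^{n}-q^{n_0+1})_q^{-\alpha}>0$ (positivity once more from (\ref{6}), since $n_0+1-n\geq 2$). Finally $y(q^{n})\geq y(q^{n_0})=y(a)\geq0$ because $y$ is increasing and $q^{n}\geq q^{n_0}$, so the right-hand side is $\geq0$. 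Hence $s(q^{n})\geq s(q^{n+1})$ and $(~_{q}\nabla_{aq}^{\alpha}y)(q^{n})\geq0$.

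The crux is this last identification: the expression $q^{n}(q^{n}-q^{n+1})_q^{-\alpha}y(q^{n})+\sum_{i\geq n+1}q^{i}c_iy(q^{i})$ mixes one nonnegative term with several nonpositive ones, and the estimate is useless unless one recognises that the ``constant part'' $q^{n}(q^{n}-q^{n+1})_q^{-\alpha}+\sum_{i\geq n+1}q^{i}c_i$ telescopes (via the power rule) to the manifestly positive $q^{n}(q^{n}-qa)_q^{-\alpha}$ — equivalently, that $(~_{q}\nabla_{aq}^{\alpha}\mathbf 1)(t)=\dfrac{(t-qa)_q^{-\alpha}}{\Gamma_q(1-\alpha)}\geq0$. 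Everything else is bookkeeping: the repeated appeals to the product formula (\ref{6}) to see that the factorial functions $(q^{m}-q^{\ell})_q^{-\alpha}$ occurring here are positive (their two arguments always differ by at least two powers of $q$), and the elementary fact that $\nabla_q s\geq0$ is the same as $s$ being nondecreasing in $t$ on $T_q$.
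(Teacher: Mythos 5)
Your proof is correct, and while it starts from the same skeleton as the paper's argument --- the same auxiliary function $s(t)={}_{q}I_{aq}^{1-\alpha}y(t)$, the same peeling off of the top term, the same appeal to the Lemma to see that the kernel differences $c_i=(q^{n}-q^{i+1})_q^{-\alpha}-(q^{n+1}-q^{i+1})_q^{-\alpha}$ are negative, and the same termwise use of the monotonicity of $y$ --- it diverges at exactly the point where the paper's printed proof is weakest. After replacing $y(q^{i})$ by $y(q^{n})$ one is left with $y(q^{n})$ times a sum consisting of one positive term and several negative ones; the paper's final display simply declares this nonnegative, and in doing so the negative coefficient $\frac{q^{\alpha}-1}{1-q^{i+1-n_0+k}}$ of the preceding line silently becomes $\frac{1-q^{\alpha}}{1-q^{i+1-n_0+k}}$, so the genuinely nontrivial estimate is absorbed into a sign flip. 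You close this gap explicitly, by recognizing the residual coefficient sum as a positive multiple of $({}_{q}\nabla_{aq}^{\alpha}\mathbf 1)(q^{n})=\frac{(q^{n}-qa)_q^{-\alpha}}{\Gamma_q(1-\alpha)}>0$, which you verify through the power rule (it also follows from the Riemann--Caputo relation quoted after Theorem 2.1, since the Caputo derivative of a constant vanishes); this positivity of the fractional derivative of the constant function is precisely the ingredient needed to make the termwise bound conclusive, and it is the standard device in the discrete monotonicity literature. One tiny bookkeeping slip: the bracket equals $\Gamma_q(1-\alpha)\,q^{n}\,({}_{q}\nabla_{aq}^{\alpha}\mathbf 1)(q^{n})$ rather than $\frac{\Gamma_q(1-\alpha)}{1-q}\,q^{n}$ times it, because the factor $(1-q)q^{n}$ coming from $\nabla_q$ cancels your $\frac{1}{1-q}$; since only the sign matters, this does not affect the validity of your argument.
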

\begin{proof}
We want to prove that
$$ _{q}\nabla_{aq}^{\alpha} y(t)=\nabla _{q}\nabla _{aq}^{-(1-\alpha)} y(t)=\nabla_q \left[ \frac{1}{\Gamma_q (1-\alpha)}\int_{aq}^{t}(t-qs)_{q}^{-\alpha}y(s)\nabla_q (s)\right] \geq 0$$
Let $$s(t)=\left[ \frac{1}{\Gamma_q (1-\alpha)}\int_{aq}^{t}(t-qs)_{q}^{-\alpha}y(s)\nabla_q (s)\right]=\frac{1-q}{\Gamma_q (1-\alpha)} \sum_{i=n_0}^n q^i (q^n-q^{i+1})_q^{-\alpha}y(q^i).$$ Since $\nabla_q s(t) \geq 0.$
We need to show that $s(t)$ is increasing on $T_q.$ i.e. we need to show that
$s(q^{n_0-k-1})-s(q^{n_0-k})\geq0$ for any natural number $k$ with $k\geq1.$
In fact,
\begin{eqnarray*}
&& s(q^{n_0-k-1})-s(q^{n_0-k})\\
&=&\frac{(1-q)}{\Gamma_q(1-\alpha)}\left[\sum_{i=n_0-k-1}^{n_0} q^i (q^{n_0-k-1}-q^{i+1})_q^{-\alpha}y(q^i)-\sum_{i=n_0-k}^{n_0} q^i (q^{n_0-k}-q^{i+1})_q^{-\alpha}y(q^i)\right]\\
&=&\frac{(1-q)}{\Gamma_q(1-\alpha)}[q^{n_0-k-1} (q^{n_0-k-1}-q^{n_0-k})_q^{-\alpha}y(q^{n_0-k-1})\\
&+&\sum_{i=n_0-k}^{n_0} q^i (q^{n_0-k-1}-q^{i+1})_q^{-\alpha}y(q^i)-\sum_{i=n_0-k}^{n_0} q^i (q^{n_0-k}-q^{i+1})_q^{-\alpha}y(q^i)]\\
&=&\frac{(1-q)}{\Gamma_q(1-\alpha)}[q^{n_0-k-1}(q^{n_0-k-1}-q^{n_0-k})_q^{-\alpha}y(q^{n_0-k-1})\\
&+&\sum_{i=n_0-k}^{n_0} q^i [\frac{q^{\alpha}-q^{i+1-n_0+k}}{1-q^{i+1-n_0+k}}-1](q^{n_0-k}-q^{i+1})_q^{-\alpha}y(q^i)]\\
&=&\frac{(1-q)}{\Gamma_q(1-\alpha)}[q^{n_0-k-1}(q^{n_0-k-1}-q^{n_0-k})_q^{-\alpha}y(q^{n_0-k-1})\\
&+&\sum_{i=n_0-k}^{n_0} q^i \frac{q^{\alpha}-1}{1-q^{i+1-n_0+k}}(q^{n_0-k}-q^{i+1})_q^{-\alpha}y(q^{n_0-k-1}-q^i)\\
&-&\sum_{i=n_0-k}^{n_0} q^i \frac{q^{\alpha}-1}{1-q^{i+1-n_0+k}}(q^{n_0-k}-q^{i+1})_q^{-\alpha}y(q^{n_0-k-1})]\\
&\geq&\frac{(1-q)}{\Gamma_q(1-\alpha)}[q^{n_0-k-1}(q^{n_0-k-1}-q^{n_0-k})_q^{-\alpha}y(q^{n_0-k-1})\\
&+&\sum_{i=n_0-k}^{n_0} q^i \frac{1-q^{\alpha}}{1-q^{i+1-n_0+k}}(q^{n_0-k}-q^{i+1})_q^{-\alpha}y(q^{n_0-k-1})]\geq0.
\end{eqnarray*}
\end{proof}
\begin{thm}\label{9}
Let $y:T_q \rightarrow  \mathbb{R} $ be a function satisfying $y(q^{n_0}) > 0$, $a=q^{n_0}$. Suppose that $y$ is a strictly increasing function on $T_q$. Then,
 $$_{q}\nabla_{aq}^{\alpha} y(t)> 0 \mbox{ for each} \; t=q^n, n<n_0.$$
\end{thm}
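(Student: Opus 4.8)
\noindent The plan is to recycle, almost verbatim, the computation in the proof of the preceding theorem (the one asserting that an increasing $y$ with $y(q^{n_0})\ge 0$ satisfies ${}_{q}\nabla_{aq}^{\alpha}y\ge 0$), because that argument is not inductive: it produces directly an explicit formula for the increments of
$$s(q^{n})=\frac{1-q}{\Gamma_q(1-\alpha)}\sum_{i=n_0}^{n}q^{i}\,(q^{n}-q^{i+1})_q^{-\alpha}\,y(q^{i}),\qquad {}_{q}\nabla_{aq}^{\alpha}y(q^{n})=\nabla_q s(q^{n}).$$
So I would run the very same chain of equalities up to the line in which, for an arbitrary integer $k\ge 0$, the difference $s(q^{n_0-k-1})-s(q^{n_0-k})$ is exhibited as $\frac{1-q}{\Gamma_q(1-\alpha)}$ times the finite sum of the terms
$$q^{n_0-k-1}(q^{n_0-k-1}-q^{n_0-k})_q^{-\alpha}\,y(q^{n_0-k-1})\quad\text{and}\quad q^{i}\,\frac{1-q^{\alpha}}{1-q^{\,i+1-n_0+k}}\,(q^{n_0-k}-q^{i+1})_q^{-\alpha}\,y(q^{n_0-k-1}),$$
for $i=n_0-k,\dots,n_0$.

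Each of those terms is a \emph{strictly positive} multiple of $y(q^{n_0-k-1})$. Indeed $1-q^{\alpha}>0$ and $1-q^{\,i+1-n_0+k}>0$ since $1\le i+1-n_0+k\le k+1$, $\Gamma_q(1-\alpha)>0$, and every $q$-factorial $(q^{m}-q^{m'})_q^{-\alpha}$ occurring here has $m<m'$, so the ratio of its two arguments lies in $(0,1)$ and, by the product representation $(\ref{6})$, all of its factors are positive. Hence $s(q^{n_0-k-1})-s(q^{n_0-k})$ equals $y(q^{n_0-k-1})$ times a strictly positive constant. Now strict monotonicity of $y$ together with $n_0-k-1<n_0$ forces $y(q^{n_0-k-1})>y(q^{n_0})>0$, so this increment is strictly positive, whence
$${}_{q}\nabla_{aq}^{\alpha}y(q^{n_0-k-1})=\nabla_q s(q^{n_0-k-1})=\frac{s(q^{n_0-k-1})-s(q^{n_0-k})}{(1-q)\,q^{n_0-k-1}}>0 .$$
As $k$ runs through $0,1,2,\dots$, the point $t=q^{n_0-k-1}$ runs through all $q^{n}$ with $n<n_0$, which is the assertion.

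A shorter alternative is the linear decomposition $y=z+y(q^{n_0})$, where $z(q^{m})=y(q^{m})-y(q^{n_0})$: then $z$ is strictly increasing with $z(q^{n_0})=0\ge 0$, so the preceding theorem gives ${}_{q}\nabla_{aq}^{\alpha}z(t)\ge 0$, while the power rule (with $\mu=0$), the $\nabla_q$-rule for $q$-factorials and the $q$-Gamma recursion give ${}_{q}\nabla_{aq}^{\alpha}(\mathbf 1)(t)=\frac{(t-aq)_q^{-\alpha}}{\Gamma_q(1-\alpha)}>0$ for $t=q^{n}$, $n<n_0$ (again by $(\ref{6})$); by linearity ${}_{q}\nabla_{aq}^{\alpha}y(t)={}_{q}\nabla_{aq}^{\alpha}z(t)+y(q^{n_0})\,{}_{q}\nabla_{aq}^{\alpha}(\mathbf 1)(t)>0$ since $y(q^{n_0})>0$. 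I do not expect a genuine obstacle in either route: the only delicate point is the sign bookkeeping through the $q$-factorial identities of the Lemma, i.e.\ checking that every coefficient and every $(q^{m}-q^{m'})_q^{-\alpha}$ entering the final lower bound is positive — which $(\ref{6})$ settles — and the passage from ``$\ge$'' to ``$>$'' then comes for free from $y(q^{n_0})>0$ together with strict monotonicity.
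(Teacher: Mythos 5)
Your main route (recycling the previous proof ``verbatim'') has a genuine gap, and it comes precisely from taking the printed final display of that proof at face value. The exact identity is
\begin{equation*}
s(q^{n_0-k-1})-s(q^{n_0-k})=\tfrac{1-q}{\Gamma_q(1-\alpha)}\Bigl[q^{n_0-k-1}(q^{n_0-k-1}-q^{n_0-k})_q^{-\alpha}y(q^{n_0-k-1})+\sum_{i=n_0-k}^{n_0}q^{i}\,\tfrac{q^{\alpha}-1}{1-q^{\,i+1-n_0+k}}\,(q^{n_0-k}-q^{i+1})_q^{-\alpha}y(q^{i})\Bigr],
\end{equation*}
and the factor $q^{\alpha}-1$ is \emph{negative}: the summed terms are negative multiples of the values $y(q^{i})$, not positive multiples of $y(q^{n_0-k-1})$. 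Monotonicity only lets you replace each $y(q^{i})$ ($i\ge n_0-k$) by the larger value $y(q^{n_0-k-1})$, which yields the lower bound $y(q^{n_0-k-1})\,[A-\Sigma]$ with $A=q^{n_0-k-1}(q^{n_0-k-1}-q^{n_0-k})_q^{-\alpha}>0$ and $\Sigma=\sum_{i}q^{i}\frac{1-q^{\alpha}}{1-q^{\,i+1-n_0+k}}(q^{n_0-k}-q^{i+1})_q^{-\alpha}>0$; the positivity of $A-\Sigma$ is exactly the nontrivial point and cannot be read off term by term, so your sentence ``each of those terms is a strictly positive multiple of $y(q^{n_0-k-1})$'' is false, as is the claim that the increment \emph{equals} $y(q^{n_0-k-1})$ times a constant (it depends on all the $y(q^{i})$). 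The needed fact is true --- applying the same computation to $y\equiv 1$ shows $A-\Sigma=q^{n_0-k-1}(q^{n_0-k-1}-q^{n_0+1})_q^{-\alpha}>0$, because ${}_{q}\nabla_{aq}^{\alpha}\mathbf 1(t)=(t-qa)_q^{-\alpha}/\Gamma_q(1-\alpha)$ --- but that is an extra step, not bookkeeping. (The paper's own last display of the preceding proof contains the same sign slip, so reproducing it does not constitute a proof.)

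Your ``shorter alternative,'' by contrast, is correct and is the clean way to the strict statement: $z=y-y(q^{n_0})$ is increasing with $z(q^{n_0})=0$, so the preceding theorem gives ${}_{q}\nabla_{aq}^{\alpha}z(t)\ge 0$, while the power rule with $\mu=0$ (or, even more directly, the Riemann--Caputo relation quoted before the Corollary applied to $f\equiv 1$) gives ${}_{q}\nabla_{aq}^{\alpha}\mathbf 1(t)=(t-qa)_q^{-\alpha}/\Gamma_q(1-\alpha)>0$ for $t=q^{n}$, $n<n_0$; by linearity ${}_{q}\nabla_{aq}^{\alpha}y(t)\ge y(q^{n_0})\,(t-qa)_q^{-\alpha}/\Gamma_q(1-\alpha)>0$. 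This differs from what the paper intends (it states the strict theorem without a separate proof, implicitly re-running the previous computation with strict inequalities), it supplies exactly the positivity fact your first route is missing, and it even shows that strict monotonicity of $y$ is not needed once $y(q^{n_0})>0$. Keep that route as the proof; if you insist on the first route, you must insert the constant-function evaluation of $A-\Sigma$ to close the sign gap.
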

In a similar way, the above results can be obtained for the function which takes negative value at the initial point of its domain.
\begin{thm}
Let $y:T_q \rightarrow  \mathbb{R} $ be a function satisfying $y(q^{n_0}) \leq 0$. Suppose that
 $$_{q}\nabla_{aq}^{\alpha} y(t)\leq 0 \mbox{ for each} \; t=q^n, n<n_0.$$
 Then, $y$ is $c_q(\alpha)-$decreasing on $T_q$
\end{thm}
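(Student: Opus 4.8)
The plan is to deduce this from Theorem \ref{first} by using the linearity of the Riemann $q$-fractional operator $_{q}\nabla_{aq}^{\alpha}$. Set $w(t)=-y(t)$. Since $_{q}\nabla_{aq}^{\alpha}$ is defined as a nabla $q$-difference applied to a finite weighted $q$-sum (the $q$-fractional integral $_{q}\nabla_{aq}^{-(1-\alpha)}$), it is a linear operator on functions $T_q\to\mathbb{R}$; hence $_{q}\nabla_{aq}^{\alpha}w(t)=-\,_{q}\nabla_{aq}^{\alpha}y(t)\ge 0$ for each $t=q^n$ with $n<n_0$, and $w(q^{n_0})=-y(q^{n_0})\ge 0$. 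Thus $w$ satisfies the hypotheses of Theorem \ref{first}.

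Applying Theorem \ref{first} to $w$ gives that $w$ is $c_q(\alpha)$-increasing on $\{t\in T_q:t\ge a=q^{n_0}\}$, i.e. $w(q^{n-1})\ge c_q(\alpha)\,w(q^n)$ for every such $q^n$. Substituting $w=-y$ and multiplying by $-1$ (which reverses the inequality) yields $y(q^{n-1})\le c_q(\alpha)\,y(q^n)$ for all such $q^n$, which is exactly the definition of $y$ being $c_q(\alpha)$-decreasing. This finishes the argument. Alternatively, a self-contained proof is obtained by repeating verbatim the induction in the proof of Theorem \ref{first} with all inequalities reversed: the function $s(t)=\frac{1-q}{\Gamma_q(1-\alpha)}\sum_{i=n_0}^{n}q^i(q^n-q^{i+1})_q^{-\alpha}y(q^i)$ is now decreasing (since $\nabla_q s(t)\le 0$), the base step gives $y(q^{n_0-1})\le c_q(\alpha)y(q^{n_0})$, and the inductive step chains $y(q^{n_0-k})\le c_q(\alpha)y(q^{n_0-k+1})\le\cdots\le c_q^{k}(\alpha)y(q^{n_0})$, using $y(q^{n_0})\le 0$ and the signs of the coefficients $\frac{1-q^{\alpha}}{1-q^{j}}$ exactly as before.

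The only point requiring care — and it is bookkeeping rather than a real obstacle — is to confirm that $c_q(\alpha)$-decreasing is genuinely the ``mirror'' of $c_q(\alpha)$-increasing under $y\mapsto -y$, which is immediate from the two definitions, and that no sign subtlety arises when passing inequalities through multiplication by $c_q(\alpha)$; this holds because $0\le c_q(\alpha)\le 1$ for $0<\alpha\le 1$. One should also state the conclusion on the set $\{t\in T_q:t\ge a=q^{n_0}\}$, matching the domain in Theorem \ref{first}.
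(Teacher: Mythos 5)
Your proposal is correct. Note that the paper gives no explicit proof of this theorem: it only remarks, just before stating it, that the negative-initial-value results ``can be obtained in a similar way,'' i.e.\ by rerunning the induction of Theorem \ref{first} with all inequalities reversed. Your primary route --- apply Theorem \ref{first} to $w=-y$, using linearity of $_{q}\nabla_{aq}^{\alpha}$ (a nabla $q$-derivative of a $q$-integral, hence linear) so that $_{q}\nabla_{aq}^{\alpha}w=-\,_{q}\nabla_{aq}^{\alpha}y\ge 0$ and $w(a)\ge 0$ --- is a cleaner formalization of exactly that remark, and the passage from $w(q^{n-1})\ge c_q(\alpha)w(q^n)$ to $y(q^{n-1})\le c_q(\alpha)y(q^n)$ is just negation, so no sign issue with $c_q(\alpha)$ even arises. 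Your closing observation is also apt: as stated the theorem claims the conclusion ``on $T_q$,'' but (as with Theorem \ref{first}) it should be read on $\{t\in T_q: t\ge a=q^{n_0}\}$ with $a>0$; your reduction delivers precisely that corrected statement.
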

\begin{thm}
Let $y:T_q \rightarrow  \mathbb{R} $ be a function satisfying $y(q^{n_0}) \leq 0$. Suppose that $y$ is a decreasing function on $T_q$. Then,
 $$_{q}\nabla_{aq}^{\alpha} y(t)\leq 0 \mbox{ for each}~ \; t=q^n, n<n_0.$$
\end{thm}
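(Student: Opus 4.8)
The plan is to obtain this statement as the mirror image, under $y\mapsto -y$, of the monotonicity theorem already proved above for increasing functions with nonnegative initial value, exploiting the linearity of the $q$-fractional operator $_{q}\nabla_{aq}^{\alpha}$.

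First I would record that $_{q}\nabla_{aq}^{\alpha}$ is linear: by definition $_{q}\nabla_{aq}^{\alpha}y(t)=\nabla_q\left[\frac{1}{\Gamma_q(1-\alpha)}\int_{aq}^{t}(t-qs)_q^{-\alpha}y(s)\nabla_q s\right]$ is the composition of the nabla $q$-integral against the fixed kernel $(t-qs)_q^{-\alpha}$ with the nabla $q$-derivative $\nabla_q$, both of which are linear operations; in particular $_{q}\nabla_{aq}^{\alpha}(-y)(t)=-{}_{q}\nabla_{aq}^{\alpha}y(t)$. Then I would set $z:=-y$. Since $y$ is decreasing on $T_q$ and $0<q<1$, the function $z$ is increasing on $T_q$, and since $y(q^{n_0})\leq 0$ we have $z(q^{n_0})\geq 0$; thus $z$ satisfies exactly the hypotheses of the increasing-case theorem above, which gives $_{q}\nabla_{aq}^{\alpha}z(t)\geq 0$ for every $t=q^n$ with $n<n_0$. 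By linearity $_{q}\nabla_{aq}^{\alpha}y(t)=-{}_{q}\nabla_{aq}^{\alpha}z(t)\leq 0$ on the same set, which is the desired conclusion.

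Should a self-contained argument be preferred, the proof of the increasing case can be repeated verbatim with every inequality reversed: put $s(t)=\frac{1-q}{\Gamma_q(1-\alpha)}\sum_{i=n_0}^{n}q^i(q^n-q^{i+1})_q^{-\alpha}y(q^i)$, so that $_{q}\nabla_{aq}^{\alpha}y=\nabla_q s$, and show that $s(q^{n_0-k-1})-s(q^{n_0-k})\leq 0$ for every integer $k\geq 1$, i.e.\ that $s$ is decreasing on $T_q$. The telescoping and regrouping of the sums are identical to those in the increasing case, one uses the Lemma to rewrite the $q$-factorial terms, and then the sign is pinned down by the fact that each coefficient $\frac{q^\alpha-1}{1-q^{i+1-n_0+k}}$ is negative (because $0<q<1$ and $0<\alpha<1$) together with $y$ being decreasing and $y(q^{n_0})\leq 0$. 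I do not expect a genuine obstacle here, since the statement is only the sign-reversed form of one already established; the only points that need care are that the linearity of $_{q}\nabla_{aq}^{\alpha}$ be invoked correctly and --- in the direct approach --- that the direction of each inequality be tracked through the regrouping step, precisely where the sign $q^\alpha-1<0$ enters.
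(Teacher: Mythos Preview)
Your proposal is correct. The paper itself does not write out a proof for this theorem; it simply remarks that ``In a similar way, the above results can be obtained for the function which takes negative value at the initial point of its domain,'' and your argument via $z=-y$ and linearity of $_{q}\nabla_{aq}^{\alpha}$ (or equivalently repeating the computation of the increasing case with all inequalities reversed) is exactly the intended justification.
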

\section{An Application}
In this section, we wish to prove a Mean-Value Theorem in $q-$fractional calculus. First, we need the following result:
\begin{thm} \label{Tool}
Let $f$ be defined on $T_q$ and $a,b \in T_q$ with $a<b.$ Then the following equality holds:
$$_{q}\nabla_{a}^{-\alpha} \: _{q}\nabla_{aq}^{\alpha} f(t)|_b = f(b)- \frac{(1-q)a^{1-\alpha}(b-a)_q^{\alpha -1}(1-q)_q^{-\alpha }}{\Gamma _q (\alpha)\Gamma _q (1-\alpha)}f(a)\:\: \mbox{where} \; \alpha \in (0,1).$$
\end{thm}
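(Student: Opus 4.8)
The plan is to compute $_{q}\nabla_{a}^{-\alpha}\,_{q}\nabla_{aq}^{\alpha}f(t)\big|_b$ directly by unwinding the definitions and using the composition/cancellation identities available for the $q$-fractional operators. First I would write $_{q}\nabla_{aq}^{\alpha}f = \nabla_q\,_{q}\nabla_{aq}^{-(1-\alpha)}f$, so that the left-hand side becomes $_{q}I_{a}^{\alpha}\nabla_q\,_{q}I_{aq}^{1-\alpha}f$ evaluated at $b$. The natural move is to recognize the inner piece $\nabla_q\,_{q}I_{aq}^{1-\alpha}f$ as essentially a Caputo-type $q$-fractional derivative, or alternatively to apply directly the known relation between the Riemann and Caputo $q$-fractional derivatives quoted right before the Corollary above, namely $(_{q}C_{a}^{\alpha}f)(t) = (_{q}\nabla_{a}^{\alpha}f)(t) - \frac{(t-a)_q^{-\alpha}}{\Gamma_q(1-\alpha)}f(a)$, together with the cancellation law $_{q}I_{a}^{\alpha}\,_{q}C_{a}^{\alpha}f(t) = f(t) - f(a)$ for $0<\alpha\le 1$ from equation (14).

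Concretely, I would argue as follows. Apply $_{q}I_{a}^{\alpha}$ to both sides of the Riemann–Caputo relation (with base point shifted appropriately to $aq$, taking care of the starting point mismatch between $a$ and $aq$, which is the source of the correction term): this gives
$$_{q}I_{a}^{\alpha}\,_{q}\nabla_{aq}^{\alpha}f(t) = {}_{q}I_{a}^{\alpha}\,_{q}C_{aq}^{\alpha}f(t) + \frac{f(aq)}{\Gamma_q(1-\alpha)}\,_{q}I_{a}^{\alpha}(t-aq)_q^{-\alpha}.$$
The first term on the right collapses to $f(t)-f(aq)$ by (14). The second term is evaluated using the power rule (15), $_{q}I_{a}^{\alpha}(x-a)_q^{\mu} = \frac{\Gamma_q(\mu+1)}{\Gamma_q(\alpha+\mu+1)}(x-a)_q^{\mu+\alpha}$ — here with $\mu = -\alpha$ so that $\mu+\alpha=0$ and one needs the properties of the $q$-factorial function listed after (\ref{6}), in particular $(t-s)_q^{\beta+\gamma} = (t-s)_q^\beta (t-q^\beta s)_q^\gamma$ and the scaling $(at-as)_q^\beta = a^\beta(t-s)_q^\beta$, to rewrite $(b-aq)_q^{-\alpha}$ in terms of $(b-a)_q^{\alpha-1}$ and $(1-q)_q^{-\alpha}$. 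Collecting the pieces at $t=b$ and simplifying the $q$-Gamma prefactors should produce exactly $f(b) - \frac{(1-q)a^{1-\alpha}(b-a)_q^{\alpha-1}(1-q)_q^{-\alpha}}{\Gamma_q(\alpha)\Gamma_q(1-\alpha)}f(a)$, after noting $f(aq)$ combines with the second term — or, if the intended reading is that the correction is expressed purely through $f(a)$, one uses continuity/evaluation to pass between $f(aq)$ and $f(a)$ in the boundary term.

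The main obstacle I anticipate is bookkeeping with the two slightly different base points $a$ and $aq$: the operator $_{q}\nabla_{aq}^{\alpha}$ starts the integration at $aq$ while the outer integral $_{q}\nabla_{a}^{-\alpha}={}_{q}I_a^\alpha$ starts at $a$, so the clean cancellation identities cannot be applied verbatim and one must track the extra half-node carefully; this is precisely what generates the somewhat unusual explicit coefficient involving $(1-q)_q^{-\alpha}$ and $a^{1-\alpha}$. A secondary technical point is making sure the power rule (15) is legitimately applicable at $\mu=-\alpha\in(-1,0)$ and that all the $q$-factorial identity manipulations respect the convergence of the infinite product in (\ref{6}); these are routine but need to be stated. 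Once the base-point discrepancy is handled, the rest is a direct computation with $q$-Gamma function identities.
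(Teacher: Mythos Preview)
Your plan has the right spirit but contains a genuine gap precisely at the point you flag as ``the main obstacle'' --- and the gap is not closed. You write that ${}_{q}I_a^{\alpha}\,{}_{q}C_{aq}^{\alpha}f(t)$ ``collapses to $f(t)-f(aq)$ by (14)'', but identity (14) is ${}_{q}I_a^{\alpha}\,{}_{q}C_{a}^{\alpha}f(t)=f(t)-f(a)$ with \emph{matching} base points; with the outer integral based at $a$ and the Caputo derivative based at $aq$ this cancellation simply does not hold as stated. Your fallback, ``use continuity/evaluation to pass between $f(aq)$ and $f(a)$'', is also not valid: on the discrete scale $T_q$ the points $a$ and $aq$ are distinct nodes and $f(a)$, $f(aq)$ are unrelated values. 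The theorem's correction term genuinely involves $f(a)$, not $f(aq)$, so this discrepancy is fatal to the sketch as written.

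The paper sidesteps both issues by a different, more direct route. Instead of invoking the Riemann--Caputo relation and then (14), it commutes the outer fractional integral with $\nabla_q$ via the rule
\[
{}_{q}\nabla_a^{-\alpha}(\nabla_q g)(t)=\nabla_q\,{}_{q}\nabla_a^{-\alpha}g(t)-\frac{(t-a)_q^{\alpha-1}}{\Gamma_q(\alpha)}\,g(a),
\]
applied with $g={}_{q}\nabla_{aq}^{-(1-\alpha)}f$. The boundary term is $g(a)={}_{q}I_{aq}^{1-\alpha}f(a)$, a one-node $q$-integral from $aq$ to $a$ that evaluates explicitly to $\frac{(1-q)a^{1-\alpha}(1-q)_q^{-\alpha}}{\Gamma_q(1-\alpha)}f(a)$ --- this is how $f(a)$ (not $f(aq)$) enters. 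For the remaining piece one shifts the inner base point, ${}_{q}\nabla_{aq}^{-(1-\alpha)}f={}_{q}\nabla_{a}^{-(1-\alpha)}f+\frac{(1-q)a(t-qa)_q^{-\alpha}}{\Gamma_q(1-\alpha)}f(a)$, applies the semigroup law ${}_{q}\nabla_a^{-\alpha}\,{}_{q}\nabla_a^{-(1-\alpha)}={}_{q}\nabla_a^{-1}$, and notes that the shift-correction becomes a constant after ${}_{q}\nabla_a^{-\alpha}$ and hence is killed by $\nabla_q$. If you want to salvage your approach, you must replace the appeal to (14) by exactly this base-point adjustment, at which point the argument becomes the paper's.
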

\begin{proof}
\begin{eqnarray*}
&& _{q}\nabla_{a}^{-\alpha} \: _{q}\nabla_{aq}^{\alpha} f(t)\left. \right|_b
=\:_{q}\nabla_{a}^{-\alpha} (\nabla_q \:_{q}\nabla_{aq}^{-(1-\alpha)} f(t))\left. \right|_b\\
&=&\nabla_q \: _{q}\nabla_{a}^{-\alpha} \:_{q}\nabla_{aq}^{-(1-\alpha)} f(t)\left|_b \right.-\frac{(t-a)_q^{\alpha -1}}{\Gamma_q(\alpha)}\: _{q}\nabla_{aq}^{-(1-\alpha)}f(a)\left. \right|_b\\
&=&\nabla_q \: _{q}\nabla_{a}^{-\alpha} \left[_{q}\nabla_{a}^{-(1-\alpha)} f(t)+\frac{(1-q)a(t-qa)_q^{-\alpha}f(a)}{\Gamma _q (1-\alpha)}\right]\left. \right|_b -\frac{(t-a)_q^{\alpha -1}}{\Gamma_q(\alpha)}\: _{q}\nabla_{aq}^{-(1-\alpha)}f(a)\left. \right|_b\\
&=&\nabla_q \: _{q}\nabla_{a}^{-1} f(t)\left. \right|_b -\frac{(t-a)_q^{\alpha -1}}{\Gamma_q(\alpha)}\: _{q}\nabla_{aq}^{-(1-\alpha)}f(a)\left. \right|_b\\
&=&f(b)-\frac{(1-q)a^{1-\alpha}(b-a)_q^{\alpha -1}(1-q)_q^{-\alpha }}{\Gamma _q (\alpha)\Gamma _q (1-\alpha)}f(a).
\end{eqnarray*}
\end{proof}
Let $M_q(\alpha,a,b)=\frac{(1-q)a^{1-\alpha}(b-a)_q^{\alpha -1}(1-q)_q^{-\alpha }}{\Gamma _q (\alpha)\Gamma _q (1-\alpha)}$.
\begin{thm} \label{MVT}
Assume $f$ and $g$ are defined on $T_q$, $a, b \in T_q,~~a>0$ and $g$ is strictly increasing on $\Delta = \left\{t\in T_q : t \geq a, t \leq b \right\}$ and satisfying $g(a)>0$. Then, there exists $r_1,r_2 \in \Delta$ such that
$$\frac{_{q}\nabla_{aq}^{\alpha}f(r_1)}{_{q}\nabla_{aq}^{\alpha}g(r_1)} \leq \frac{f(b)-M_q(\alpha,a,b)f(a)}{g(b)-M_q(\alpha,a,b)g(a)}\leq \frac{_{q}\nabla_{aq}^{\alpha}f(r_2)}{_{q}\nabla_{aq}^{\alpha}g(r_2)}.$$
\end{thm}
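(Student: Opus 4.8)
The plan is to reduce Theorem~\ref{MVT} to the elementary weighted-mediant inequality: if $p_i\in\mathbb{R}$ and $\sigma_i>0$ for $i$ ranging over a finite index set, then $\min_i(p_i/\sigma_i)\le(\sum_i p_i)/(\sum_i\sigma_i)\le\max_i(p_i/\sigma_i)$. The bridge to this inequality is Theorem~\ref{Tool}, which expresses the numerator and the denominator of the middle quotient as $q$-fractional integrals of the corresponding $q$-fractional derivatives. Writing $a=q^{n_0}$, $b=q^{m_0}$ with $m_0<n_0$ (here $m_0<n_0$ since $a<b$ and $0<q<1$), and recalling $_{q}\nabla_{a}^{-\alpha}h(t)\big|_b=\frac{1}{\Gamma_q(\alpha)}\int_a^b(b-qs)_q^{\alpha-1}h(s)\,\nabla_q s$ together with the finite-sum form~(3) of the $\nabla_q$-integral, the identity of Theorem~\ref{Tool} gives
$$ f(b)-M_q(\alpha,a,b)f(a)=\frac{1-q}{\Gamma_q(\alpha)}\sum_{i=m_0}^{n_0-1}q^i\,(q^{m_0}-q^{i+1})_q^{\alpha-1}\,(_{q}\nabla_{aq}^{\alpha}f)(q^i), $$
and the same formula with $f$ replaced by $g$. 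Thus all three terms of the claimed inequality are quotients of sums $\sum_{i=m_0}^{n_0-1}w_i\,(_{q}\nabla_{aq}^{\alpha}h)(q^i)$ sharing the same weights $w_i=\frac{1-q}{\Gamma_q(\alpha)}\,q^i\,(q^{m_0}-q^{i+1})_q^{\alpha-1}$.

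The first point to establish is that each weight $w_i$, $m_0\le i\le n_0-1$, is strictly positive. Since $0<\alpha<1$ we have $\Gamma_q(\alpha)>0$; and because $0<q^{i+1}<q^{m_0}$ for these $i$, the ratio $\rho:=q^{i+1}/q^{m_0}=q^{i+1-m_0}$ satisfies $0<\rho\le q<1$, so in the product representation~(\ref{6}) of $(q^{m_0}-q^{i+1})_q^{\alpha-1}$ the leading factor $(q^{m_0})^{\alpha-1}$ is positive and every factor $(1-\rho q^{k})/(1-\rho q^{k+\alpha-1})$ is positive: for $k\ge1$ both $\rho q^{k}$ and $\rho q^{k+\alpha-1}$ lie in $(0,1)$, while for $k=0$ the denominator is positive because $\rho q^{\alpha-1}\le q\cdot q^{\alpha-1}=q^{\alpha}<1$. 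Hence $w_i>0$.

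The second point is the positivity of the $q$-fractional derivatives of $g$ at the nodes. Since $g$ is strictly increasing on $\Delta$ and $g(a)>0$, Theorem~\ref{9} yields $(_{q}\nabla_{aq}^{\alpha}g)(q^i)>0$ for every $i$ with $m_0\le i\le n_0-1$: each such index satisfies $i<n_0$, and the computation behind Theorem~\ref{9} (as in the proof of Theorem~\ref{first}) evaluates $g$ only at grid points of $[a,q^i]\subseteq\Delta$, so strict monotonicity on the window $\Delta$ is all that is used. Consequently $\sigma_i:=w_i\,(_{q}\nabla_{aq}^{\alpha}g)(q^i)>0$ for every $i$, and in particular $g(b)-M_q(\alpha,a,b)g(a)=\sum_{i=m_0}^{n_0-1}\sigma_i>0$, so the middle quotient in the statement is well defined.

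It then remains to apply the weighted-mediant inequality with $p_i=w_i\,(_{q}\nabla_{aq}^{\alpha}f)(q^i)$ and the $\sigma_i$ just described; since $p_i/\sigma_i=(_{q}\nabla_{aq}^{\alpha}f)(q^i)/(_{q}\nabla_{aq}^{\alpha}g)(q^i)$, this produces
$$ \min_{m_0\le i\le n_0-1}\frac{_{q}\nabla_{aq}^{\alpha}f(q^i)}{_{q}\nabla_{aq}^{\alpha}g(q^i)}\ \le\ \frac{f(b)-M_q(\alpha,a,b)f(a)}{g(b)-M_q(\alpha,a,b)g(a)}\ \le\ \max_{m_0\le i\le n_0-1}\frac{_{q}\nabla_{aq}^{\alpha}f(q^i)}{_{q}\nabla_{aq}^{\alpha}g(q^i)}. $$
Choosing $r_1$ to be the node $q^i$ realizing the minimum and $r_2$ the one realizing the maximum finishes the proof, since every $q^i$ with $m_0\le i\le n_0-1$ lies in $\Delta=\{t\in T_q:a\le t\le b\}$ (indeed $q^{n_0-1}=a/q\ge a$ and $q^{n_0-1}\le q^{m_0}=b$ as $n_0>m_0$). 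I expect the only delicate part to be the two preliminary sign computations — deriving $w_i>0$ from the infinite product~(\ref{6}) with the negative exponent $\alpha-1$, and checking that Theorem~\ref{9} genuinely applies when $g$ is assumed strictly increasing only on $\Delta$; once those are secured, the mediant inequality does the rest mechanically.
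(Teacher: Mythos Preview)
Your proof is correct and rests on the same two ingredients as the paper's --- Theorem~\ref{Tool} to rewrite the middle quotient as a ratio of $q$-fractional integrals of the respective derivatives, and Theorem~\ref{9} to guarantee strict positivity of $_{q}\nabla_{aq}^{\alpha}g$ --- but the packaging differs. The paper argues by contradiction: assuming (say) the right-hand inequality fails for every $t\in\Delta$, it multiplies through by the positive quantity $_{q}\nabla_{aq}^{\alpha}g(t)$, applies $_{q}\nabla_{a}^{-\alpha}$ at $t=b$ to both sides (implicitly using that this operator preserves strict inequalities, i.e., that its kernel is positive), and invokes Theorem~\ref{Tool} to obtain the absurdity $f(b)-M_q(\alpha,a,b)f(a)>f(b)-M_q(\alpha,a,b)f(a)$. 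Your direct route via the weighted-mediant inequality unfolds the same logic as a single finite-sum estimate, and in doing so you make explicit two points the paper leaves tacit: the positivity of the weights $w_i=\frac{1-q}{\Gamma_q(\alpha)}\,q^i(q^{m_0}-q^{i+1})_q^{\alpha-1}$ (which is exactly what justifies the paper's step of integrating a strict inequality), and the observation that Theorem~\ref{9}, though stated for functions strictly increasing on all of $T_q$, only uses monotonicity on $\Delta$ at the relevant nodes. The direct argument is cleaner and locates the same extremal nodes $r_1,r_2$ that the contradiction argument implicitly invokes.
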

\begin{proof}
Suppose without loss of generality to the contrary that
$$\frac{f(b)-M_q(\alpha,a,b)f(a)}{g(b)-M_q(\alpha,a,b)g(a)}> \frac{_{q}\nabla_{aq}^{\alpha}f(t)}{_{q}\nabla_{aq}^{\alpha}g(t)} $$
Since $g$ is strictly increasing, Theorem (\ref{9}) implies that $_{q}\nabla_{aq}^{\alpha}g(t)> 0$. Hence
$$\frac{f(b)-M_q(\alpha,a,b)f(a)}{g(b)-M_q(\alpha,a,b)g(a)}\: _{q}\nabla_{aq}^{\alpha}g(t)>\: _{q}\nabla_{aq}^{\alpha}f(t).$$
We apply $_{q}\nabla_{a}^{-\alpha}$ at $t=b$ to both sides of the above inequality and make use of Theorem \ref{Tool} to get
$$\frac{f(b)-M_q(\alpha,a,b)f(a)}{g(b)-M_q(\alpha,a,b)g(a)}\:[g(b)-M_q(\alpha,a,b)g(a)]> f(b)-M_q(\alpha,a,b)f(a).$$ This leads to $g(b)> g(b)$,  which is a contradiction.
\end{proof}

\begin{rem}
\begin{itemize}
  \item Notice that if the edge point $a=0$ then $M_q(\alpha,a,b)=0$ and hence no Mean-Value Theorem.
  \item Notice that since $M_q(\alpha,a,b)<1$ and $g(t)$ is strictly decreasing then $g(b)-M_q(\alpha,a,b)g(a)$ in the statement of Theorem \ref{MVT} is not equal to zero.
\end{itemize}
\end{rem}

\end{document}